\newcommand{\topdeg}{\operatorname{topdeg}}
\newcommand{\LM}{\operatorname{LM}}
\newcommand{\GG}{\mathcal{G}}
\newcommand{\lt}{\operatorname{LT}}
\newcommand{\bC}{{\boldsymbol C}}
\newcommand{\BZ}{{\boldsymbol Z}}
\theoremstyle{plain}
\newtheorem{theorem}{Theorem}
\newtheorem{lemma}[theorem]{Lemma}
\newtheorem{proposition}[theorem]{Proposition}
\theoremstyle{definition}
\theoremstyle{remark}
\newtheorem{Example}[theorem]{Example}
\newcounter{hours}\newcounter{minutes}
\newcommand{\printtime}{%
        \setcounter{hours}{\time/60}%
        \setcounter{minutes}{\time-\value{hours}*60}%
        \thehours\,h\ \theminutes\,min}
\begin{document}

\title[Hilbert ideal and Coinvariants] {Gr\" {o}bner bases for the Hilbert ideal and
coinvariants of the Dihedral group $D_{2p}$}
\date{\today,\ \printtime}
\author{ Martin Kohls}
\address{Technische Universit\"at M\"unchen \\
 Zentrum Mathematik-M11\\
Boltzmannstrasse 3\\
 85748 Garching, Germany}
\email{kohls@ma.tum.de}

\author{M\"uf\.it Sezer}
\address { Department of Mathematics, Bilkent University,
 Ankara 06800 Turkey}
\email{sezer@fen.bilkent.edu.tr}
%\email{mufit.sezer@boun.edu.tr}
\thanks{We thank T\" {u}bitak for
funding a visit of the first author to Bilkent University, and
Gregor Kemper for funding a visit of the second author to TU
M\"unchen. Second author is also partially supported by
T\"{u}bitak-Tbag/109T384 and T\"{u}ba-Gebip/2010. }

\subjclass[2000]{13A50} \keywords{Dihedral groups, Coinvariants,
Hilbert ideal, Universal Gr\"obner Bases}
\begin{abstract}
We consider a finite dimensional representation  of the dihedral
group $D_{2p}$ over a field of characteristic two where $p$ is an
odd prime  and study the  corresponding Hilbert ideal $I_H$.  We
show that $I_H$ has a universal Gr\" {o}bner basis consisting of
invariants and monomials only. We provide sharp bounds for the
degree of an element in this basis and in a minimal generating set
for $I_H$. We also compute the top degree of coinvariants.
\end{abstract}

 \maketitle

\section{introduction}
Let $V$ be a finite dimensional representation of a finite group
$G$ over a field $F$. There is an induced action of $G$ on the
symmetric algebra $F[V]$ of $V^{*}$ that is given by $g (f)=f\circ
g^{-1}$ for $g \in G$ and $f\in F[V]$. Let $F[V]^G$ denote the
ring of invariant polynomials in $F[V]$. One of the main goals in
invariant theory is to determine $F[V]^G$ by computing the
generators and relations. A closely related object is the Hilbert
ideal, denoted $I_H$, which is the ideal in $F[V]$ generated by
invariants of positive degree. The Hilbert ideal often plays an
important role in invariant theory as it is possible to extract
information from it about the invariant ring.
 %For instance,   a generating set of $F[V]$ as a module over $F[V]^G$
%can be read off from a Gr\"obner basis for $I_H$ which in turn
%yields a basis for the invariants that can be obtained by
%averaging over the group.
 There is also substantial evidence that
the Hilbert ideal is better behaved than the full invariant ring
in terms of constructive complexity. The invariant ring is in
general not generated by invariants of degree at most the group
order when the characteristic of $F$ divides the group order (this
is known as the modular case) but it has been conjectured \cite[Conjecture
3.8.6 (b)]{MR1918599} that
 the Hilbert ideal  always is. Apart from
the non-modular case this conjecture is known to be true if $V$
is a trivial source module or if $G=\BZ_p$ and $V$ is an
indecomposable module. Furthermore,  Gr\"obner bases for $I_H$ have
been determined for some classes of groups. The reduced Gr\"obner
bases corresponding to several representations of $\BZ_p$ have been
computed in a study  of the module structure of the coinvariant
ring $F[V]_G$ which is defined to be $F[V]/I_H$, see \cite{MR2193198}.
The reduced Gr\"obner bases for the natural action of the
symmetric and the alternating group can be found in
\cite{MR1457831} and \cite{MR2246711}, respectively. These bases
have applications in coding theory, see \cite{MR1958953}.

In this paper we consider a representation of the  dihedral group
$D_{2p}$ over a field of characteristic two where $p$ is an odd
prime. Invariants of $D_{2p}$ in characteristic zero have been
studied by Schmid \cite{MR1180987} where she shows beyond other things that $\bC
[V]^{D_{2p}}$ is generated by invariants of degree at most $p+1$.
More recently, bounds for the degrees of elements in both
generating and separating sets  over an algebraically closed field
of characteristic two have been computed, see \cite{SKD2p}. We continue
further in this direction and show that the Hilbert ideal $I_H$ is
generated by invariants up to degree $p$ and not less. We also
construct a universal Gr\"obner basis for $I_H$, i.e. a set $\GG$
which forms a Gr\"obner basis of $I_H$ for any monomial order.
Somewhat unexpectedly, the only polynomials that are not invariant
in this set are monomials. Moreover, the maximal degree of a
polynomial in the basis is $p+1$. This is also atypical for
Gr\"obner basis calculations because passing from a generating set
to a Gr\"obner basis   increases  the  degrees  rapidly in
general. Then we turn our attention to the coinvariants. Of
particular interest are the top degree and the dimension of
$F[V]_G$, because a vector space basis for $F[V]_G$ yields a basis
for the invariants that can be obtained by averaging over the
group and these invariants may be crucial in efficient generation
of the whole invariant ring, see for example \cite{MR2264069}. Perhaps
among the most celebrated results on coinvariants is one due to
Steinberg \cite{MR0167535} which says that the group order $|G|$
is a lower bound for the dimension of $F[V]_G$ as a vector space,
which is sharp if and only if the invariant ring $F[V]^G$ is
polynomial, see also \cite{MR1972694}. Using the Gr\"obner basis
for $I_H$ we compute the top degree of the coinvariants of
$D_{2p}$. It turns out that for faithful representations, the top degree equals the upper bound for the
maximum degree of a polynomial in a minimal generating set that
was given in \cite{SKD2p}. Also we present upper bounds for the
top degree and the dimension of coinvariants of arbitrary finite
groups, which might be part of the folklore, but do not seem to
have appeared explicitly yet.

\section{The Hilbert ideal}\label{SetupSection}

We start by fixing our notation. Let $p\ge 3$ be an odd integer
and let $G$ denote the dihedral group of order $2p$, generated by
an element $\sigma$ of order $2$ and an element $\rho$ of order
$p$. We also let $F$ denote a field of characteristic two which contains a
primitive $p$th root of unity. We assume that $G$ acts on the polynomial ring
\[ F[V] =
F[x_1,\ldots,x_r,y_1,\ldots,y_r,z_1,\ldots,z_s,w_1,\ldots,w_s]
\]
as follows: The element $\sigma$ permutes $x_i$ and $y_i$ for
$i=1,\ldots,r$ and $z_i$ and $w_i$ for $i=1,\ldots,s$
respectively. Furthermore, $\rho$ acts trivially on $z_i$ and $w_i$
for $i=1,\ldots,s$, while $\rho(x_i)=\lambda_i x_i$ and
$\rho(y_i)=\lambda_i^{-1} y_i$ for $\lambda_i$ a non trivial
$p$-th root of unity for $i=1,\ldots,r$. Up to choice of a basis,
this is the form of an arbitrary reduced $G$-action, see
\cite{SKD2p}. We will write $u$ to denote any of the variables of
$F[V]$, and then $v$ for $\sigma(u)$. Let further $M$ denote the
subset of monomials of $F[V]$. For $m\in M^\rho$, we write $o(m)$
for the orbit sum of $m$, i.e. $o(m)=m$ if $m\in M^G$ and
$o(m)=m+\sigma(m)$ if $m\in M^\rho\setminus M^G$. Recall that
$F[V]^G$ is generated by orbit sums of $\rho$-invariant monomials.

Note that a result of Fleischmann \cite[Theorem 4.1]{MR1800251} implies that
the Hilbert ideal is generated by invariants up to degree $2p$. In
the following proposition, among other things, we sharpen this
bound to $p$.

\begin{proposition}\label{HilbertIdeal}
\begin{enumerate}
\item[(a)] The Hilbert ideal $I_H$ is generated by invariants of
positive degree at most $p$.

\item[(b)] If $m\in M^\rho$ and $u|m$, then $um\in I_H$.

\item[(c)] If $m\in M^\rho$ and $u_1$ and $u_2$ are variables such
that $u_{1}^{2}|m$ and $\rho$ acts on $u_1$ and $u_2$ by
multiplication with the same root of unity, then $mu_2\in I_H$.
\end{enumerate}
\end{proposition}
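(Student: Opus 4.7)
The plan is to prove (b) first, derive (c) from it via an auxiliary $\rho$-invariant monomial, and then prove (a) by induction on $\deg m$.

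For (b), write $m = u \cdot (m/u)$. If $m \in M^G$ then $m$ itself is a positive-degree invariant, so $um \in I_H$ is immediate. Otherwise $o(m) = m + \sigma(m)$ is a $G$-invariant in $I_H$, hence $u \cdot o(m) \in I_H$ gives $um \equiv u\sigma(m) = (uv)\sigma(m/u) \pmod{I_H}$ with $v := \sigma(u)$. The relation $\rho\sigma = \sigma\rho^{-1}$ forces $\rho(v) = \lambda^{-1} v$ whenever $\rho(u) = \lambda u$, so $uv$ is $\rho$-invariant, and it is $\sigma$-invariant because $\sigma$ swaps $u$ with $v \neq u$ (no variable is $\sigma$-fixed in this representation). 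Thus $uv$ is a positive-degree $G$-invariant and $um \in I_H$.

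For (c), set $m' := u_1 u_2 \cdot (m/u_1^2)$. Because $\rho$ acts on $u_1$ and $u_2$ by the same scalar $\lambda$, we have $\rho(u_1 u_2) = \lambda^2 u_1 u_2 = \rho(u_1^2)$, so $\rho(m') = m'$ follows from $\rho(m) = m$. Now $u_1 \mid m'$, and (b) applied to $m'$ with variable $u_1$ yields $u_1 m' = mu_2 \in I_H$.

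For (a), let $J$ denote the ideal generated by invariants of positive degree at most $p$. I induct on $\deg m$ to show $o(m) \in J$ for every nonconstant $m \in M^\rho$; the case $\deg m \le p$ is trivial. Assume $\deg m > p$ and split into two cases. \emph{Case (i):} some monomial of the form $x_iy_i$ or $z_jw_j$ divides $m$. Take $m_1$ to be such a factor; it is a $G$-invariant of degree $2 \le p$, and $\sigma(m_1) = m_1$ forces $m_1 \mid \sigma(m)$ as well, so $o(m) \in \langle m_1 \rangle \subseteq J$. \emph{Case (ii):} $m$ has ``disjoint support'', meaning no $x_iy_i$ or $z_jw_j$ divides $m$. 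Here I extract a low-degree $\rho$-invariant divisor of $m$ by pigeonhole: writing the $n = \deg m > p$ letters of $m$ in some order and forming partial sums $S_0, \dots, S_p$ of their $\rho$-weights in $\BZ/p\BZ$, two of these $p+1$ values must coincide, producing a contiguous sub-word that is a $\rho$-invariant divisor $m_1$ of $m$ with $1 \le \deg m_1 \le p$. Setting $m_2 = m/m_1$, the disjoint-support hypothesis transfers to $m_1$ and $m_2$ and forces each of $m_1, m_2, m$ into $M^\rho \setminus M^G$ (any positive-degree monomial with disjoint support fails to be $\sigma$-invariant). Consequently $o(m_1)$ is a generator of $J$, $o(m_2) \in J$ by the inductive hypothesis, and a direct computation in characteristic $2$ gives
\[
o(m) \;=\; m + \sigma(m) \;=\; o(m_1) \cdot m_2 + \sigma(m_1) \cdot o(m_2) \in J.
\]

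The main obstacle is Case (ii) of (a). Without any $G$-invariant monomial factor, one cannot reduce $m$ by a single multiplication; the argument instead requires the Davenport-style pigeonhole to produce a $\rho$-invariant divisor of degree at most $p$, coupled with the symmetric identity above, which only collapses cleanly to $o(m)$ because the disjoint-support hypothesis keeps all three of $m_1$, $m_2$, $m$ outside $M^G$.
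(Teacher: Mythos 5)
Your proof is correct and follows essentially the same route as the paper: the same characteristic-two identity $m+\sigma(m)=(m_1+\sigma(m_1))m_2+\sigma(m_1)(m_2+\sigma(m_2))$ driving an induction on degree for (a), and the same computation $um=u(m+\sigma(m))+uv\,\sigma(m/u)$ for (b), with (c) being the paper's direct calculation repackaged as an application of (b) to the $\rho$-invariant monomial $u_1u_2(m/u_1^2)$. The only notable difference is that you obtain the factorization of a high-degree $\rho$-invariant monomial by a direct partial-sum pigeonhole instead of invoking Proposition~\ref{SchmidCyclicGroups} through Lemma~\ref{MonDecomp}(a), which is in any case the standard argument behind that fact.
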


\begin{proof}
(a) Let $I$ denote the ideal of $F[V]$ generated by invariants of
positive degree at most $p$. We have to show that the Hilbert
ideal, which is generated by orbit sums of $\rho$-invariant
monomials of positive degree, equals $I$. For the sake of a proof
by contradiction, take a $\rho$-invariant monomial $m$ of minimal
degree $d$ such that $o(m)$ is not in $I$. First assume $m\in M^G$,
and take a variable $u$ appearing in $m$. Then also $v=\sigma(u)$
appears in $m$, so $uv|m$, and as $uv$ is an invariant of degree
$2$, this shows that $m\in I$. Secondly, assume $m\in
M^{\rho}\setminus M^G$. Since $d>p$, by Lemma \ref{MonDecomp} (a) we have a
factorization $m=m_1m_2$ of $m$ into two $\rho$-invariant
monomials $m_1,m_2$ of degree strictly smaller than $d$. We consider
\[
o(m)=m_1m_2+\sigma(m_1 m_2)= m_1(m_2+\sigma(m_2)) +
\sigma(m_2)(m_1+\sigma(m_1)),
\]
where $m_i+\sigma(m_i)$ for $i=1,2$ respectively are either zero
or orbit sums of $\rho$-invariant monomials of degree strictly
smaller than $d$, hence they are in $I$ by induction.

(b)  Write $m=um'$, where  $m'$ is a monomial. Then
$um=u^2m'=u(m+\sigma(m)) + uv \sigma(m')$ is in  $I_H$,
because $(m+\sigma(m))$ and $uv$ are.

(c)  Write $m=u_1^2 m'$, where  $m'$ is a monomial. Then
\[
u_2m=u_{2}u_{1}^{2}m'=u_{1} (u_2u_1m' + \sigma(u_2u_1m')) +
u_1(\sigma(u_2u_1m'))
\]
is in $I_H$: The first summand is a multiple of the
orbit sum of the $\rho$-invariant monomial $u_2u_1m'$, and the
second one is a multiple of the invariant $u_1v_1$.
\end{proof}

In the proof, we have used part (a) of the following lemma:

\begin{lemma}\label{MonDecomp}
\begin{enumerate}
\item[(a)] Every $\rho$-invariant monomial $m$ of degree at least
$p+1$ can be written as a product of two $\rho$-invariant
monomials $m_1,m_2$ whose degrees are strictly smaller than the
degree of $m$.

\item[(b)] Assume $p$ is an odd prime. The ideals
\begin{eqnarray*}
I&=&\langle \{um\,\,|\,\, m\in M^\rho \text{ and } u \text{
a variable dividing }m\}\rangle\\
\text{ and }\,\,I'&=&\langle \{um\,\,|\,\, m\in M^\rho \text{ of
degree at most }p \text{ and } u \text{ a variable dividing
}m\}\rangle
\end{eqnarray*} of $F[V]$
are equal.
\end{enumerate}

\end{lemma}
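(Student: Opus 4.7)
The plan is to prove (a) by a pigeonhole argument on partial sums of $\rho$-characters modulo $p$, and then obtain (b) from (a) by induction on degree; the inclusion $I'\subseteq I$ in (b) is trivial, so only (a) and the reverse inclusion in (b) demand work.

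For (a), I would first dispose of the easy case where $m$ is divisible by a trivially-acting variable $u\in\{z_j,w_j\}$: then $m_1:=u$ and $m_2:=m/u$ are both $\rho$-invariant of degrees $1$ and $\deg(m)-1$, each strictly less than $\deg(m)\ge p+1$. So assume $m$ involves only the variables $x_i,y_i$. Fix a primitive $p$-th root of unity $\omega$, write $\lambda_i=\omega^{k_i}$ with $k_i\in\{1,\dots,p-1\}$, and assign to each such variable the character $c(x_i):=k_i$, $c(y_i):=-k_i$ in $\BZ/p$. List the $n=\deg(m)\ge p+1$ variable factors of $m$ (with multiplicity) as $t_1,\dots,t_n$ and form the partial sums $s_\ell:=c(t_1)+\cdots+c(t_\ell)\pmod p$; then $s_0=0$ and $s_n=0$ (the latter being precisely $\rho$-invariance of $m$). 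Applying pigeonhole to the $p+1$ values $s_0,\dots,s_p$ gives indices $0\le i<j\le p$ with $s_i=s_j$, and the factors $m_1:=t_{i+1}\cdots t_j$ and $m_2:=m/m_1$ are both $\rho$-invariant, with degrees $j-i$ and $n-(j-i)$ both lying in $\{1,\dots,n-1\}$ because $1\le j-i\le p<n$.

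For (b), I would induct on $\deg(m)$ to show that every generator $um$ of $I$ lies in $I'$. The base case $\deg(m)\le p$ is immediate since $um$ is then a generator of $I'$ (and $\deg(m)\ge 1$ because $u\mid m$). For $\deg(m)\ge p+1$, apply (a) to factor $m=m_1m_2$ with $m_1,m_2\in M^\rho$ and $\deg(m_i)<\deg(m)$; as $u\mid m$, $u$ divides at least one factor, say $m_1$, and by the inductive hypothesis $um_1\in I'$, whence $um=(um_1)m_2\in I'$.

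The only real obstacle is the combinatorial step in (a): one must arrange that both pieces of the split are simultaneously nonempty and of degree strictly smaller than $\deg(m)$, which is exactly what the bounds $1\le j-i\le p$ combined with $n\ge p+1$ deliver. The a priori worry that the character pigeonhole could break down for monomials involving the trivially-acting variables $z_j,w_j$ is dissolved at the outset by the one-line peeling argument, which needs no combinatorics.
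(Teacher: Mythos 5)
Your proof is correct; the interesting point is that your part (b) takes a genuinely different route from the paper's. For (a) you are essentially reproving the zero-sum statement that the paper delegates to Proposition \ref{SchmidCyclicGroups}: the pigeonhole on the partial sums $s_0,\ldots,s_p$ is the standard proof of the first assertion of that proposition, and your preliminary peeling of a trivially-acting variable is exactly the paper's ``obvious'' case, so (a) is the same argument made self-contained. For (b) the paper does \emph{not} induct through (a). It first reduces to the case where $u$ is not $\rho$-invariant and $m$ contains no $\rho$-invariant variable, and then invokes the \emph{prime} case of Proposition \ref{SchmidCyclicGroups}, applied to $p+1$ of the characters of $um$ with the character of $u$ listed twice: primality guarantees that this particular repeated pair can be completed to a zero-sum subset, which produces in one step a $\rho$-invariant monomial $m'$ with $u\mid m'$, $m'\mid m$ and $\deg m'\le p$, whence $um\in\langle um'\rangle\subseteq I'$. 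Your argument instead factors $m=m_1m_2$ via (a), observes that $u$ must divide one factor, and recurses on degree; this uses only the non-prime part of the zero-sum lemma and therefore proves (b) without assuming $p$ prime. The paper's approach buys an explicit degree-at-most-$p$ generator of $I'$ dividing the given generator of $I$ in a single application; yours buys generality and economy of means. Since $I$ and $I'$ are monomial ideals, membership of a monomial is equivalent to divisibility by one of the monomial generators, so your version still supports every later use of the lemma (divisibility of certain $s$-polynomial terms by monomials of $\GG$). No gaps.
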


\begin{proof}
(a) In case $m$ contains a variable with trivial $\rho$-action,
the statement is obvious. Otherwise, it follows from  Proposition
\ref{SchmidCyclicGroups} applied to the characters of the
$\rho$-actions on $p+1$ arbitrary variables (counted with
multiplicity) appearing in $m$.

(b) We have to show $I\subseteq I'$, so take $um\in I$ with $u$ a
variable dividing $m$, where $m$ is a $\rho$-invariant monomial of
degree at least $p+1$. If $u$ is $\rho$-invariant, then
$um\in\langle u^2\rangle\subseteq I'$, so assume $u$ is not
$\rho$-invariant. Also we can assume that $m$ does not contain any
$\rho$-invariant variable by induction. Now Proposition
\ref{SchmidCyclicGroups} applied to $p+1$ of the  characters of
the $\rho$-action on the variables of $um$, with the character of
$u$ appearing twice, provides a $\rho$-invariant monomial $m'$
dividing $m$ of degree at most $p$, which is divisible by $u$.
Hence $um\in \langle um'\rangle\subseteq I'$.
\end{proof}

\begin{proposition}[{Schmid \cite[proof of Proposition
7.7]{MR1180987}}]\label{SchmidCyclicGroups} Let
$x_{1},\ldots,x_{t}\in ({\mathbb
  Z}/p {\mathbb Z})\setminus\{\overline{0}\}$ ($p\ge 2$ a natural number) be a sequence of
  $t\ge p+1$ nonzero elements. Then there exists a pair of
   indices
   $k_{1},k_{2}\in\{1,\ldots,t\}$, $k_{1}\ne
  k_{2}$ such that $x_{k_{1}}=x_{k_{2}}$ with the additional property that there exists a subset of indices
  $\{i_{1},\ldots,i_{r}\}\subseteq\{1,\ldots,t\}\setminus\{k_{1},k_{2}\}$ such
  that
$$
x_{k_{1}}+x_{i_{1}}+\ldots+x_{i_{r}}=\overline{0}. $$ If $p$ is
prime, any pair of
 indices
   $k_{1},k_{2}\in\{1,\ldots,t\}$, $k_{1}\ne
  k_{2}$ such that $x_{k_{1}}=x_{k_{2}}$ has this additional
  property.
\end{proposition}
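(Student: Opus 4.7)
My plan is to combine a pigeonhole argument on the multiset of values with a Cauchy--Davenport style analysis of subset sums modulo $p$. The first pigeonhole observation is immediate: the $x_i$ lie in the $(p-1)$-element set $(\mathbb{Z}/p\mathbb{Z})\setminus\{\overline{0}\}$ and there are $t\geq p+1\geq (p-1)+2$ of them, so some value $a$ is attained by at least two of the $x_i$, and candidate pairs $(k_1,k_2)$ with $x_{k_1}=x_{k_2}$ always exist. The task is to pick such a pair that can be completed by a suitable subset of the other indices.

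For the ``any pair'' strengthening when $p$ is prime, I would fix an arbitrary such pair with $x_{k_1}=x_{k_2}=a$ and show that the remaining $t-2\geq p-1$ nonzero residues already have subset sums covering all of $\mathbb{Z}/p\mathbb{Z}$, in particular the value $-a$. Writing $\Sigma_k$ for the set of subset sums of the first $k$ remaining elements $y_1,\dots,y_k$, one has $\Sigma_k=\Sigma_{k-1}\cup(\Sigma_{k-1}+y_k)$. When $p$ is prime each nonzero $y_k$ generates $\mathbb{Z}/p\mathbb{Z}$, so $\Sigma_{k-1}+y_k=\Sigma_{k-1}$ would force $\Sigma_{k-1}$ to be a union of cosets of $\langle y_k\rangle=\mathbb{Z}/p\mathbb{Z}$, hence either empty or everything. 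Since $0\in\Sigma_0$, $|\Sigma_k|$ must therefore increase by at least one at each step until all $p$ residues are hit, which occurs no later than $k=p-1$. Hence $-a\in\Sigma_{p-1}$ and the required indices $\{i_1,\dots,i_r\}$ exist for \emph{any} choice of repeat pair.

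For the general ``some pair'' statement with $p$ only a natural number, one cannot expect every repeat pair to work (small examples already show this), so the argument must locate a good pair. Here I would pass to the partial sums $s_j=x_1+\cdots+x_j$ for $j=0,\dots,t$; the $t+1\geq p+2$ partial sums sit in $p$ residue classes, so a refined pigeonhole forces either a class attained three times or two classes each attained twice, producing at least two nonempty zero-sum consecutive subsequences. A case analysis on how the corresponding intervals overlap and on the multiplicities of the $x_i$ should then locate a zero-sum subset $S\subseteq\{1,\dots,t\}$ together with indices $k_1\in S$ and $k_2\notin S$ satisfying $x_{k_1}=x_{k_2}$; taking $T=S\setminus\{k_1\}$ then yields the desired $\{i_1,\dots,i_r\}$.

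The main obstacle I anticipate is precisely the non-prime case. There, the subset-sum set of the remaining $t-2$ elements can fail to exhaust $\mathbb{Z}/p\mathbb{Z}$, so the clean Cauchy--Davenport argument of the prime case breaks down, and one has to coordinate between different repeat pairs and different zero-sum subsets in order to match a repeat to a zero-sum subset that ``uses it up''. The prime-case strengthening, by contrast, falls out almost for free once this subset-sum covering observation is in place.
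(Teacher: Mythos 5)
First, note that the paper does not prove this proposition at all: it is quoted verbatim from Schmid with a citation to her proof, so there is no in-paper argument to compare against and your proposal has to stand on its own.

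Your treatment of the prime case is correct and complete. Removing an arbitrary repeat pair leaves $t-2\ge p-1$ nonzero residues, and the standard Davenport-type induction ($\Sigma_k=\Sigma_{k-1}\cup(\Sigma_{k-1}+y_k)$, with stabilization impossible for a nonzero $y_k$ in $\mathbb{Z}/p\mathbb{Z}$ unless $\Sigma_{k-1}$ is already everything) shows the subset sums of those $p-1$ elements exhaust $\mathbb{Z}/p\mathbb{Z}$, so $-a$ is hit. This proves the ``any pair'' assertion, and with the pigeonhole observation it also settles the existence assertion whenever $p$ is prime.

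The genuine gap is the composite case, which the proposition asserts for every natural number $p\ge 2$ and which the paper actually uses with $p$ an odd integer (Lemma 2(a) and Proposition 1(a) are proved before the standing primality assumption is imposed). What you need there is a nonempty zero-sum subset $S$ that \emph{splits} some value class, i.e.\ contains some but not all indices carrying a repeated value; your plan produces, via pigeonhole on the $t+1\ge p+2$ partial sums, two zero-sum consecutive blocks and then defers to ``a case analysis [that] should then locate'' the witness. That deferral is the entire difficulty. Two zero-sum intervals carry no information about where the equal elements sit, and the natural way to make them carry such information --- order the sequence with equal elements consecutive, so that any coincidence $s_i=s_j$ at a non-boundary position immediately splits a class --- does not close: in the bad case the $t-m$ non-boundary partial sums must all be distinct from each other and from $s_0$, which only forces $t\le p+m-1$, and with $m$ up to $p-1$ distinct values this is compatible with $t\ge p+1$. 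Likewise, the overlap calculus of two intervals (unions, differences, symmetric differences) only manufactures more zero-sum sets, none of which need split a class. A correct proof of the composite case requires a further idea --- e.g.\ passing to the subgroup generated by the values, inducting on $p$ or on the number of distinct values, or invoking the structure of long zero-sum-free sequences over $\mathbb{Z}/n\mathbb{Z}$ --- none of which appears in the sketch. You correctly identified this as the main obstacle, but as written the non-prime half of the proposition remains unproved.
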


Note that when $p$ is not a prime, this additional property is not
guaranteed for an arbitrary choice of indices $k_1,k_2$ with
$x_{k_1}=x_{k_2}$. For example when $p=sl$ with $s,l>1$, consider
$x_1=x_2=\bar{1}$ and $x_i=\bar{s}$ for $i=3,\ldots,p+1$ and take
$k_1=1,\, k_2=2$.\\

We recall the following notation: For a given  monomial order $<$ on $M$ and a polynomial $f$
we write $\LM (f)$ for the leading monomial of $f$. Also,  for a subset $\GG\subseteq F[V]$
and $f\in F[V]$ we write
$f\rightarrow_\GG 0$ if there exist elements $a_1,\ldots,a_n\in
F[V]$ and $g_1,\ldots,g_n\in \GG$ such that
$f=a_1g_1+\ldots+a_ng_n$ and $\LM(f)\ge \LM(a_ig_i)$ for
$i=1,\ldots,n$. In this case we say $f$ reduces to zero modulo
$\GG$. Notice that $f\rightarrow_\GG 0$ implies $af\rightarrow_\GG
0$ for any $a\in F[V]$.

\begin{lemma}\label{GGLemma}
Let $f,g\in F[V]$ with $\LM(f)>\LM(g)$. Then $f\rightarrow_\GG 0$
and $g\rightarrow_\GG 0$ for a set  $\GG\subseteq F[V]$ imply
$(f+g)\rightarrow_\GG 0$.
\end{lemma}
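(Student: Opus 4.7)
The plan is to unfold the definition of $\rightarrow_\GG 0$ for both $f$ and $g$ and then simply concatenate the two resulting representations to obtain one for $f+g$. Explicitly, by hypothesis there exist $a_1,\dots,a_n, b_1,\dots,b_m\in F[V]$ and $g_1,\dots,g_n, h_1,\dots,h_m\in \GG$ with
\[
f = \sum_{i=1}^n a_i g_i, \qquad g = \sum_{j=1}^m b_j h_j,
\]
and $\LM(f)\ge \LM(a_i g_i)$ for all $i$, $\LM(g)\ge \LM(b_j h_j)$ for all $j$. Adding the two equations gives a representation of $f+g$ as an $F[V]$-linear combination of elements of $\GG$; it remains only to check that the leading monomial of $f+g$ dominates $\LM(a_ig_i)$ and $\LM(b_jh_j)$ for every index.

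The key observation is that since $\LM(f)>\LM(g)$, the leading monomial $\LM(f)$ cannot cancel when we form $f+g$, so $\LM(f+g)=\LM(f)$. With this in hand the verification is immediate: for each $i$ we have $\LM(f+g)=\LM(f)\ge \LM(a_ig_i)$, and for each $j$ we have $\LM(f+g)=\LM(f)>\LM(g)\ge \LM(b_jh_j)$. Hence $(f+g)\to_\GG 0$.

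There is essentially no obstacle here; the only point deserving attention is the non-cancellation of leading terms, which follows directly from the strict inequality $\LM(f)>\LM(g)$. The lemma is really just a bookkeeping statement to be used later when combining reductions of several summands whose leading monomials differ.
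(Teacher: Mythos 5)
Your proof is correct and follows essentially the same route as the paper: both unfold the definition, combine the two representations into one for $f+g$, and use the non-cancellation fact $\LM(f+g)=\LM(f)$ (forced by $\LM(f)>\LM(g)$) to verify the required inequalities. The only cosmetic difference is that the paper merges the coefficients as $\sum(a_i+b_i)g_i$ over a common indexing, whereas you keep the two sums concatenated, which changes nothing of substance.
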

\begin{proof}
We have $f=\sum a_ig_i$ and $g=\sum b_ig_i$ for some $a_i,b_i\in
F[V]$ and $g_i\in \GG$ with $\LM (a_ig_i)\le \LM (f)$ and $\LM
(b_ig_i)\le \LM(g)< \LM (f)$. Then $(f+g)=\sum (a_i+b_i)g_i$ gives
$(f+g)\rightarrow_\GG 0$ because $\LM ((a_i+b_i)g_i)\le \max \{\LM
(a_ig_i), \LM (b_ig_i) \}\le \LM (f)=\LM(f+g)$.
\end{proof}

From now on, we will assume that $p$ is an odd prime.\\

Let $\GG$ denote the following set of polynomials:
\[
\begin{array}{rl}
m+\sigma(m)&\text{ for } m\in M^{\rho}\setminus M^G \text{ of
degree at most }p,\\
um &\text{ for } m\in M^{\rho}  \text{ of degree at most } p
\text{ and } u \text{  a variable dividing }m,\\
x_iy_i, \;  z_jw_j &\text{ for } i=1,\ldots,r \text { and } j=1,\ldots,s.\\
\end{array}\]
We show that $\GG$ is a universal Gr\"obner basis of $I_H$.
We need the following lemma.

\begin{lemma}
Let $m\in M^{\rho}$. Then $(m+\sigma(m))\rightarrow_\GG 0$.
\end{lemma}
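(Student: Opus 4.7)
The plan is to proceed by induction on $d=\deg m$. For the base case $d\le p$, there is nothing to do: if $m\in M^G$, then $m+\sigma(m)=2m=0$ in characteristic two, while if $m\in M^\rho\setminus M^G$, then $m+\sigma(m)$ is itself a generator in $\GG$. Either way $(m+\sigma(m))\rightarrow_\GG 0$ is immediate.

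For the inductive step $d>p$, I may assume $m\in M^\rho\setminus M^G$, since the case $m\in M^G$ is again trivial in characteristic two. Lemma \ref{MonDecomp}(a) then yields a factorization $m=m_1m_2$ with $m_1,m_2\in M^\rho$ of degrees strictly less than $d$. I would use the identity
\[
m+\sigma(m)=m_1(m_2+\sigma(m_2))+\sigma(m_2)(m_1+\sigma(m_1))
\]
from the proof of Proposition \ref{HilbertIdeal}(a): by the inductive hypothesis applied to $m_1$ and $m_2$, both $m_i+\sigma(m_i)$ reduce to zero modulo $\GG$, and hence so do the two summands on the right, thanks to the remark that $f\rightarrow_\GG 0$ implies $af\rightarrow_\GG 0$.

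The only genuine obstacle is then to combine these two reductions via Lemma \ref{GGLemma}, which demands that the summands have distinct leading monomials. If one of the $m_i$ happens to be $\sigma$-invariant, the corresponding summand vanishes outright, so I may further assume $m_1,m_2\in M^\rho\setminus M^G$. Expanding the two summands as $m+m_1\sigma(m_2)$ and $m_1\sigma(m_2)+\sigma(m)$, their leading monomials are $\max(m,m_1\sigma(m_2))$ and $\max(m_1\sigma(m_2),\sigma(m))$. Without loss of generality $m>\sigma(m)$. Since the monomial order is multiplicative, having simultaneously $m_1<\sigma(m_1)$ and $m_2<\sigma(m_2)$ would force $m<\sigma(m)$; hence, after relabeling $m_1\leftrightarrow m_2$ and switching to the analogous decomposition if necessary, I may arrange $m_2>\sigma(m_2)$. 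Then $m_1\sigma(m_2)<m_1m_2=m$, so the leading monomial of the first summand equals $m$ while that of the second is strictly smaller, and Lemma \ref{GGLemma} closes the induction.
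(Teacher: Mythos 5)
Your proof is correct and takes essentially the same route as the paper's: induction on degree, the factorization $m=m_1m_2$ from Lemma \ref{MonDecomp}(a), the same two-term rearrangement of $m+\sigma(m)$, and Lemma \ref{GGLemma} to combine the summands. The only (cosmetic) difference is that the paper writes the symmetric decomposition and ``harmlessly'' assumes $m_1>\sigma(m_1)$, where you spell out the multiplicativity argument and arrange $m_2>\sigma(m_2)$ instead.
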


\begin{proof}
We assume $m\in M^{\rho}\setminus M^G$ since $m+\sigma(m)=0$ if
$m\in M^G$. We also take $\deg(m)> p$ because otherwise
$m+\sigma(m)\in \GG$. Then by  Lemma \ref{MonDecomp} (a) there
exist $\rho$-invariant monomials $m_1,m_2$ of degree strictly
smaller than the degree of $m$ such that $m=m_1m_2$. Without loss
of generality, we assume $m>\sigma(m)$.  So we have either $m_1 >
\sigma (m_1)$ or $m_2
>\sigma (m_2)$. We harmlessly assume $m_1 > \sigma (m_1)$.
Consider the equation $$m+\sigma(m)=m_1m_2+\sigma(m_1m_2) = m_2
(m_1 + \sigma(m_1)) + \sigma(m_1)(m_2+\sigma(m_2)).$$ By induction
on the degree both $m_1 + \sigma(m_1)$ and $m_2 + \sigma(m_2)$
reduce to zero modulo $\GG$ and hence, so do their respective
monomial multiples $m_2 (m_1 + \sigma(m_1))$ and
$\sigma(m_1)(m_2+\sigma(m_2))$. Hence the result follows from the
previous lemma because we have $\LM (m_2 (m_1 +
\sigma(m_1)))=m_1m_2$ and $m_1m_2>\sigma(m_1)m_2$ and
$m_1m_2>\sigma(m_1)\sigma (m_2)$.
\end{proof}
\begin{theorem}
For $p$ an odd prime, $\GG$ forms a universal Gr\"obner basis of
$I_H$.
\end{theorem}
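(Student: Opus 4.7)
The plan is to fix an arbitrary monomial order $<$ on $M$ and verify two things: that $\GG \subseteq I_H$, and that every $f \in I_H$ reduces to zero modulo $\GG$. The inclusion $\GG \subseteq I_H$ is immediate: orbit sums of $\rho$-invariant monomials are $G$-invariants, $x_iy_i$ and $z_jw_j$ are $G$-invariants, and each monomial $um$ with $m \in M^\rho$ and $u \mid m$ lies in $I_H$ by Proposition \ref{HilbertIdeal}(b).

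For the second condition, the key building block is that every orbit sum reduces to zero modulo $\GG$. For $m \in M^\rho \setminus M^G$ this is the content of the preceding lemma. For $m \in M^G$ we have $o(m) = m$, and since $m$ is $\sigma$-invariant, any variable $u$ dividing $m$ comes paired with $v = \sigma(u) \mid m$; hence $uv \mid m$ for some $uv \in \{x_iy_i, z_jw_j\} \subseteq \GG$, yielding the trivial standard representation $m = (m/uv)\cdot uv$. Since $F[V]^G$ is spanned by orbit sums of $\rho$-invariant monomials (as recalled at the start of Section \ref{SetupSection}), every $f \in I_H$ can be written as $f = \sum_{i=1}^n h_i\, o(m_i)$ with $h_i \in F[V]$ and $m_i \in M^\rho$; then each summand $h_i\, o(m_i)$ reduces to zero modulo $\GG$ by multiplying its standard representation through by $h_i$, using the remark that $f \to_\GG 0$ implies $af \to_\GG 0$.

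The remaining task is to assemble these individual reductions into a standard representation of $f$ itself. When the leading monomials $\LM(h_i\, o(m_i))$ are pairwise distinct, iterated application of Lemma \ref{GGLemma} in decreasing order of leading monomials delivers the conclusion. I expect the main obstacle to be the case in which several summands share the same leading monomial $\mu$ and cancel at the top, so that naively concatenating their standard representations produces terms $a_j g_j$ with $\LM(a_j g_j) = \mu > \LM(f)$, violating the standard-representation bound. I would address this by strong induction on $\LM(f)$: group summands by leading monomial, and observe that each cancelled top-degree sub-sum is itself an element of $I_H$ of strictly smaller leading monomial, so the inductive hypothesis supplies an honest standard representation for it, which one then recombines with the remaining terms via Lemma \ref{GGLemma}. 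This essentially lifts to arbitrary sums the two-term combining trick already used inside the proof of the preceding lemma, and the argument is uniform in the monomial order, which is exactly what the word ``universal'' requires.
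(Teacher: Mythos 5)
Your reduction of the problem is sound up to the point you yourself flag as ``the main obstacle,'' but the device you propose there does not close the gap, and that gap is the entire content of the theorem. You write $f=\sum_i h_i\,o(m_i)$ and set $\mu=\max_i\LM(h_i\,o(m_i))$. When $\mu=\LM(f)$ there is indeed nothing to do (simply concatenating the standard representations of the summands already gives one for $f$). When $\mu>\LM(f)$ you form the sub-sum $S$ of those summands whose leading monomial equals $\mu$; their leading terms cancel, so $\LM(S)<\mu$ --- but there is no reason whatsoever to have $\LM(S)<\LM(f)$, so your strong induction on $\LM(f)$ cannot be applied to $S$. If you instead try to induct on $\mu$, the inductive hypothesis would require exhibiting a representation of $S$ over $\GG$ whose maximal leading monomial is strictly below $\mu$, and the only representation of $S$ you have in hand is the one with maximal leading monomial exactly $\mu$: producing a better one is precisely the statement being proved, so the argument becomes circular. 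The cancellation $\sum \lt(h_i)\lt(o(m_i))=0$ is a syzygy among the leading monomials, and resolving it requires knowing that such syzygies are generated by the pairwise ones --- which is exactly where $s$-polynomials enter; this step is not a technicality that an induction can absorb, it is where the theorem lives.

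The paper takes the route your argument is implicitly forced into: it first checks that $\GG$ generates $I_H$ (essentially your first two steps), and then verifies Buchberger's criterion, i.e., that $s(f_1,f_2)\rightarrow_\GG 0$ for all $f_1,f_2\in\GG$, via a case analysis over the pairs (two orbit sums; an orbit sum against a monomial $um$; an orbit sum against $x_iy_i$ or $z_jw_j$), using Lemma \ref{MonDecomp}(b) and Proposition \ref{SchmidCyclicGroups} in a way that is uniform in the monomial order. The lemma that orbit sums reduce to zero modulo $\GG$, which you rely on, is also used there, but as an input to the $s$-polynomial computations, not as a substitute for them. To repair your proof you would either have to carry out these $s$-polynomial verifications, or prove directly that every monomial in the lead term ideal of $I_H$ is divisible by the leading monomial of some element of $\GG$; neither follows from what you have written.
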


\begin{proof}
First note that by the second assertion of Proposition
\ref{HilbertIdeal} all elements of $\GG$ lie in $I_H$. Conversely,
by the first assertion of Proposition \ref{HilbertIdeal}, $I_H$ is
generated by orbit sums $o(m)$ of monomials $m\in M^{\rho}$ of
degree at most $p$. If $m\not\in M^G$, then $o(m)=m+\sigma(m)\in
\GG$, by construction. Otherwise, if $u|m$, we have $uv|m$, so
again $o(m)=m\in \GG$. This establishes that the ideal generated
by $\GG$ is exactly $I_H$.

Next we show that the polynomials in $\GG$ satisfy  Buchberger's
criterion. Recall that for $f_1,f_2\in F[V]$, the $s$-polynomial
$s(f_1,f_2)$ is defined to be $\frac{T}{\lt (f_1)}f_1-\frac{T}{\lt
(f_2)}f_2$, where $T$ is the least common multiple of the leading
monomials of $f_1$ and $f_2$ and $\lt (f)$ denotes the lead term
of the polynomial $f$. Buchberger's criterion says that $\GG$ is a
Gr\"obner Basis of $I_H$ if and only if $s(f_1,f_2)\rightarrow_\GG
0$ for all
 $f_1,f_2\in \GG$. Since the $s$-polynomial of two monomials is zero, we just check the $s$-polynomials of $m+\sigma(m)$ for $ m\in M^{\rho}\setminus M^G $ with
  each of the four families of polynomials in $\GG$.
  We will also use the well known fact that $s(f_1,f_2)$ reduces to zero
  modulo $\{f_1,f_2\}$ if the
  leading monomials of $f_1$ and $f_2$ are relatively prime, see \cite[Exercise 9.3]{MR2766370}.

 1)  Let $m=u_1^{a_1}\cdots u_k^{a_k}m'$ and  $n=u_1^{b_1}\cdots u_k^{b_k}n'$ be
  monomials in  $M^{\rho}\setminus M^G$ of degree at most $p$ with $a_j,b_j>0$ for $1\le j\le k$ and  $m'$ and $n'$ are relatively prime monomials.
  We further assume that neither $m'$ nor $n'$ is divisible by any
  of $u_j$ for $1\le j\le k$ and $m>\sigma (m)$ and $n>\sigma (n)$.
  Let $f_1,f_2$ denote $m+\sigma (m)$ and  $n+\sigma (n)$, respectively. Notice that
  $s(f_1,f_2)=\frac{T}{\lt (f_1)}(\sigma (m))-\frac{T}{\lt (f_2)}(\sigma (n))$. If $a_j>b_j$ for
  some $1\le j\le k$, then  $\frac{T}{\lt (f_2)}$ is divisible by $u_j$
  and so $\frac{T}{\lt (f_2)}(\sigma (n))$ is divisible by $u_jv_j$ because $\sigma (n)$ is
  divisible by $v_j$.  Similarly, if
   $b_{j'}>a_{j'}$ for some $1\le j'\le k$, then $\frac{T}{\lt (f_1)}(\sigma (m))$ is divisible by $u_{j'}v_{j'}$.
   It follows that if there are indices $1\le j,j'\le k$ such that $a_j>b_j$ and  $b_{j'}>a_{j'}$, then $s(f_1,f_2)\rightarrow_\GG
0$.  So we may
   assume $a_j\ge b_j$
   for $1\le j\le k$. Therefore
    we are reduced to two cases.

   First assume that $a_j\ge b_j$ for $1\le j\le k$ and for one of the indices the  inequality is strict, say
   $a_1>b_1$. As  in the previous paragraph $\frac{T}{\lt (f_2)}(\sigma (n))$ is divisible by $u_1v_1$. Meanwhile,
   we have $\frac{T}{\lt (f_1)}(\sigma (m))=n'v_1^{a_1}\cdots v_k^{a_k}\sigma (m')$. But since $n$ is in $M^{\rho}$,
   $\rho$ acts on $n'$ and on $v_1^{b_1}\cdots v_k^{b_k}$ by multiplication with the same root of unity.
   So $n'v_1^{a_1-b_1}\cdots v_k^{a_k-b_k}\sigma (m')$ is in $M^{\rho}$ as well because it is obtained by
   multiplying the $\rho$-invariant monomial  $v_1^{a_1}\cdots v_k^{a_k}\sigma (m')$ with
    $\frac{n'}{v_1^{b_1}\cdots v_k^{b_k}}.$ Since $a_1>b_1>0$, this shows that $\frac{T}{\lt (f_1)}(\sigma (m))$ is
    divisible by the product of the $\rho$-invariant monomial $n'v_1^{a_1-b_1}\cdots v_k^{a_k-b_k}\sigma (m')$  and the variable $v_1$ that divides this monomial.
 By Lemma \ref{MonDecomp} (b), $\frac{T}{\lt (f_1)}(\sigma
(m))$ is also divisible by a monomial in $\GG$.

Secondly, assume that $a_j=b_j$ for $1\le j\le k$.  Then we get
$s(f_1,f_2)=v_1^{a_1}\cdots v_k^{a_k}(n'\sigma(m')+m'\sigma
(n'))$. But $\rho$ multiplies $m'$ and $n'$ with the same root of
unity and hence it multiplies  $n'$ and $\sigma (m')$ with
reciprocal roots of unity. This puts  $n'\sigma(m')$ (and
$m'\sigma (n')$) in $M^{\rho}$. Hence
$s(f_1,f_2)\rightarrow_{\GG}0$, by the previous lemma.

2)   We compute the $s$-polynomial $s(f_1,f_2)$, where
$f_1=m+\sigma (m)$ for a monomial $m$ in $M^{\rho}$ of degree at
most $p$ and $f_2$ is product of a $\rho$-invariant monomial of
degree at most $p$ with a variable that divides this monomial. As before, we assume $m>\sigma (m)$. Write $m=u_1^{a_1}\cdots
u_k^{a_k}m'$ and $f_2=u_1^{b_1}\cdots u_k^{b_k}n'$ where
$a_j,b_j>0$ with relatively prime monomials $m'$ and $n'$. We
further assume $m'$ and $n'$ are not divisible by any of $u_j$. We
have $s(f_1,f_2)=\frac{T}{\lt (f_1)}(\sigma (m))$. Notice that if
$b_j>a_j$ for some $1\le j\le k$, then $\frac{T}{\lt (f_1)}$ is
divisible by $u_j$ and so $\frac{T}{\lt (f_1)}(\sigma (m))$ is
divisible by $u_jv_j$. Hence $s(f_1,f_2)$ reduces to zero modulo
$\GG$. Therefore we assume $a_j\ge b_j$ for $1\le j\le k$. So,
$s(f_1,f_2)=n'v_1^{a_1}\cdots v_k^{a_k}\sigma (m')$. By
construction there is a variable $w$ such that $w^2$ divides $f_2$
and $f_2/w$ is in $M^{\rho}$. We consider two cases.

First assume that $w^2$ divides $n'$. We have
\[
s(f_1,f_2)=n'v_1^{a_1}\cdots v_k^{a_k}\sigma (m')=\left(\frac{n'\sigma (m')v_1^{a_1-b_1}\cdots v_k^{a_k-b_k}}{w}\right)(wv_1^{b_1}\cdots v_k^{b_k
}).
\]
Since $f_2/w$ is in $M^\rho$, $\rho$ multiplies $n'/w$ and
$v_1^{b_1}\cdots v_k^{b_k}$ with the same (non-zero) scalar.
Therefore, since $\sigma (m')v_1^{a_1}\cdots v_k^{a_k}\in
M^{\rho}$, we get $\frac{n'\sigma (m')v_1^{a_1-b_1}\cdots
v_k^{a_k-b_k}}{w}\in M^{\rho}$ as well. Hence $s(f_1,f_2)$ is
divisible by the product of $w$ with a $\rho$-invariant monomial
that is divisible by $w$. By Lemma \ref{MonDecomp} (b),
$s(f_1,f_2)$ is divisible by  a monomial in $\GG$.

 Since $n'$ and $u_1^{b_1}\cdots u_k^{b_k}$ are relatively prime, we can assume as the remaining case   that $w$
 does not divide $n'$. Then
$w=u_j$ for some $1\le j\le k$. Say, $w=u_1$. We also have $a_1\ge
b_1\ge 2$. Similar to the first case we have
$$s(f_1,f_2)=n'v_1^{a_1}\cdots v_k^{a_k}\sigma (m')=(n'\sigma (m')v_1^{a_1-b_1+1}v_2^{a_2-b_2}\cdots v_k^{a_k-b_k})(v_1^{b_1-1}v_2^{b_2}\cdots v_k^{b_k}).$$
Notice that since $f_2/u_1\in M^{\rho}$,  $\rho$ acts on $n'$ and
$v_1^{b_1-1}v_2^{b_2}\cdots v_k^{b_k}$ by multiplication with the
same scalar.  Hence $(n'\sigma
(m')v_1^{a_1-b_1+1}v_2^{a_2-b_2}\cdots v_k^{a_k-b_k})$ lies in
$M^{\rho}$ because $\sigma (m')v_1^{a_1}\cdots v_k^{a_k}$ is
already $\rho$-invariant. It follows that, since $a_1-b_1+1\ge 1$
and $b_1-1\ge 1$, $s(f_1,f_2)$ is  divisible by the product of
$v_1$ with a $\rho$-invariant monomial that is divisible by $v_1$.
So we get that $s(f_1,f_2)$ is divisible by  a monomial in $\GG$
by Lemma \ref{MonDecomp} (b).

3) We compute the $s$-polynomial $s(f_1,f_2)$ where $f_1=m+\sigma
(m)$ ($m>\sigma (m)$) for a monomial $m$ in $M^{\rho}$ of degree at most $p$ and
$f_2$ is a product $uv$ for some variable $u$. Since we assume $m$
and $uv$ are not relatively prime we take $m=u^{a}m'$
 where $u$ does not divide $m'$.  If $v$  divides $m'$
 then both $m$ and $\sigma (m)$ are divisible by $uv$ and so $s(f_1,f_2)$
 equals $\sigma(m)$. Hence it is divisible by $uv$ and we are done.
 Therefore we assume $v$ does not divide $m$ so we have $s(f_1,f_2)=v\sigma
 (m)$. But $v$ divides $\sigma (m)$, and the latter is in $M^{\rho}$ and is of degree
 at most $p$. Hence $v\sigma (m)$ is an element of $\GG$.
\end{proof}

\section{Bounds for coinvariants}
 Before we
specialize to the dihedral group, we start this section with a
general result that is probably part of the folklore, but it seems
it has not been written down explicitly yet. In the following
theorem, $G$ is an arbitrary finite group and $F$ an arbitrary
field. If the field is large enough,  Dades' algorithm
\cite[Proposition 3.3.2]{DerksenKemper}
 provides a homogeneous system of parameters with each
element of degree $|G|$. Note that field extensions do not affect
the degree structure of coinvariants, so in particular we can
assume $d_i=|G|$ for $i=1,\ldots,n$ in the following theorem.

\begin{theorem}\label{GenCov}
Assume $d_1,\ldots,d_n$ are the degrees of a homogeneous system of
parameters of $F[V]^G$. Then we have
\[\begin{array}{rrrcl}
(a)&  &\topdeg (F[V]_G) &\le& \sum_{i=1}^{n} (d_i-1),\\

(b)&& \dim (F[V]_G)&\le& \prod_{i=1}^{n} d_i.\\
\end{array}
\]
In particular, we have $\topdeg (F[V]_G) \le \dim(V)(|G|-1)$ and
$\dim (F[V]_G) \le |G|^n$. If the system of parameters generates
$F[V]^G$, we have equalities in (a) and (b).
\end{theorem}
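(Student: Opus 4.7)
The plan is to exploit the obvious containment $(f_1,\ldots,f_n)\subseteq I_H$, which holds because each $f_i$ is an invariant of positive degree. This makes the coinvariant algebra $F[V]_G=F[V]/I_H$ a graded quotient of $A:=F[V]/(f_1,\ldots,f_n)$, so both the top degree and the total $F$-dimension can only decrease in passing from $A$ to $F[V]_G$. It therefore suffices to compute $\topdeg(A)$ and $\dim_F A$.

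For this I would first argue that $f_1,\ldots,f_n$ is in fact also a homogeneous system of parameters for the larger ring $F[V]$. Since $F[V]$ is module-finite over $F[V]^G$, both rings have Krull dimension $n=\dim V$, and an hsop of the subring remains an hsop in the integral overring. Because $F[V]$ is a polynomial algebra, hence Cohen--Macaulay, any hsop is automatically a regular sequence, so the Koszul complex on $f_1,\ldots,f_n$ furnishes a free resolution of $A$ over $F[V]$ and the Hilbert series of $A$ is
\[
\operatorname{Hilb}(A,t)=\prod_{i=1}^n\frac{1-t^{d_i}}{1-t}=\prod_{i=1}^n\bigl(1+t+\cdots+t^{d_i-1}\bigr).
\]
The top degree reads off as $\sum_{i=1}^n(d_i-1)$ and the substitution $t=1$ gives $\dim_F A=\prod_{i=1}^n d_i$; composing with the surjection $A\twoheadrightarrow F[V]_G$ yields (a) and (b).

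For the equality assertion, if $f_1,\ldots,f_n$ already generate $F[V]^G$ as an $F$-algebra then every positive-degree invariant lies in $(f_1,\ldots,f_n)F[V]$, so $I_H=(f_1,\ldots,f_n)F[V]$ and the surjection $A\twoheadrightarrow F[V]_G$ is an isomorphism. The final ``in particular'' inequalities follow by plugging in the uniform value $d_i=|G|$ guaranteed by Dade's algorithm (possibly after enlarging the base field, which by the remark preceding the theorem does not affect the numerical invariants of the coinvariants). The only non-routine ingredient I anticipate is the passage from hsop to regular sequence, which is settled by the Cohen--Macaulay observation above; everything else is a bookkeeping exercise with the Koszul Hilbert series.
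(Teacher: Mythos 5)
Your proposal is correct and follows essentially the same route as the paper: both bound $F[V]_G$ by the quotient $F[V]/(f_1,\ldots,f_n)F[V]$ using $(f_1,\ldots,f_n)F[V]\subseteq I_H$, and both obtain its Hilbert series $\prod_{i=1}^n(1-t^{d_i})/(1-t)^n$ from Cohen--Macaulayness. The only cosmetic difference is that you justify the Hilbert series via the Koszul complex on the regular sequence, while the paper uses the equivalent fact that $F[V]$ is a free module over the subalgebra generated by the system of parameters.
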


\begin{proof}
Let $A$ be the subalgebra of $F[V]^G$ generated by a homogeneous
system of parameters with the given degrees. As the group $G$ is
finite and $K[V]$ is Cohen-Macaulay, we have that $K[V]$ is a free
$A$-module, say $K[V]=\bigoplus_{i=1}^{r}Ag_i$ with
$g_1,\ldots,g_r$ homogeneous elements of degrees $m_1\le\ldots\le
m_r$. Then $r$ equals the dimension and $m_r$ equals the top
degree of $F[V]/(A_{+}\cdot F[V])$, respectively. As $A_+\subseteq
F[V]_{+}^{G}$, the numbers $r$ and $m_r$ are bigger than or equal
to the dimension and top degree of $F[V]/I_H$ respectively. As the
Hilbert series of $F[V]/(A_{+}\cdot F[V])$ is given by
\[H(t)=\frac{\prod_{i=1}^{n}(1-t^{d_i})}{(1-t)^n}=\prod_{i=1}^{n}
(1+t+t^2+\ldots+t^{d_i-1}),\] we get $m_r=\deg
H(t)=\sum_{i=1}^n(d_i-1)$ and $r=H(1)=\prod_{i=1}^n d_i$, which
proves (a) and (b).
\end{proof}

Now we restrict ourselves to the coinvariants of the dihedral groups.

\begin{theorem}
For $p$ an odd prime, the top degree of the coinvariants of the
dihedral group $D_{2p}$ in characteristic two equals $s+\max(r,p)$
if $r \ge 1$, and equals $s$ if $r=0$.
\end{theorem}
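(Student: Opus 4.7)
The plan is to use the universal Gr\"obner basis $\GG$ of $I_H$ constructed in the previous section. Standard monomials with respect to $\GG$ form a vector space basis of $F[V]_G$, so the top degree equals the largest degree of a monomial that is not divisible by any leading monomial of $\GG$. Throughout I would fix the lex order with $x_1 > \dots > x_r > y_1 > \dots > y_r > z_1 > \dots > z_s > w_1 > \dots > w_s$; with this choice, for every $m \in M^\rho \setminus M^G$ the $\sigma$-image $\sigma(m)$ is strictly larger than $m$, so the only type 1 leading monomials arising from $\GG$ involve some $x$ or $z$ variable.

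For the lower bound I would exhibit explicit standard monomials achieving the claimed degree. When $r = 0$ the monomial $w_1 \cdots w_s$ is standard: every $\rho$-invariant divisor has the form $\prod_{j \in S} w_j$, strictly smaller than its $\sigma$-image $\prod_{j \in S} z_j$, and squarefreeness together with the absence of any $z_jw_j$ factor rules out the remaining types of leading monomials. When $r \ge 1$ and $p \ge r$, the candidate $y_1^p w_1 \cdots w_s$ of degree $p+s$ works: its $\rho$-invariant divisors of degree at most $p$ are exactly $y_1^p$ and the $\prod_{j \in S} w_j$, each smaller than its $\sigma$-image, and no type 2 leading monomial divides it since $y_1$ has exponent exactly $p$ and each $w_j$-exponent is $1$. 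When $r > p$ I would take $y_1 \cdots y_r w_1 \cdots w_s$ of degree $r+s$; squarefreeness kills the type 2 leading monomials, and every $\rho$-invariant divisor has only $y$'s and $w$'s, hence is smaller than its $\sigma$-image.

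For the upper bound I would show that any monomial $n$ of degree exceeding the claimed value lies in $I_H$. Using $x_iy_i, z_jw_j \in I_H$ and applying Proposition~\ref{HilbertIdeal}(b) to $m = z_j$ and $m = w_j$, I may assume $n$ has no $x_iy_i$, $z_jw_j$, $z_j^2$, or $w_j^2$ factor. Decomposing $n = n_1 n_2$ into its $x,y$-part and $z,w$-part, the second factor is squarefree with at most one of $\{z_j, w_j\}$ per $j$, so $\deg n_2 \le s$ and therefore $\deg n_1 > \max(r,p)$ whenever $r \ge 1$. Since $\deg n_1 > r$ and $n_1$ uses at most $r$ distinct variables, pigeonhole forces some variable $u$ to appear in $n_1$ with multiplicity at least two. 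Since $\deg n_1 > p$, the variables of $n_1$ give a sequence of length $\ge p+1$ of nonzero $\rho$-characters in $\mathbb{Z}/p$, and Proposition~\ref{SchmidCyclicGroups} applied to a pair $(k_1, k_2)$ of positions of $u$ (which qualifies since $p$ is prime) yields a $\rho$-invariant submonomial $m'$ of $n_1$ containing $u$ at position $k_1$ but avoiding position $k_2$. The $u$ at position $k_2$ survives in $n_1/m'$, so $u\,m'$ still divides $n_1$, and Proposition~\ref{HilbertIdeal}(b) gives $u\,m' \in I_H$, whence $n \in I_H$. When $r = 0$ the degree condition $\deg n > s$ combined with the absence of a $z_jw_j$ factor forces some $z_j^2$ or $w_j^2$ by pigeonhole on $s$ pairs, which likewise places $n$ in $I_H$.

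The main obstacle will be arranging the Schmid argument so that the resulting $\rho$-invariant submonomial $m'$ genuinely contains the specific repeated variable $u$ and leaves an extra copy of $u$ available outside $m'$; this is exactly where the prime-$p$ refinement of Proposition~\ref{SchmidCyclicGroups} is essential, allowing us to select $(k_1, k_2)$ to be two positions of $u$ rather than an arbitrary equal-character pair.
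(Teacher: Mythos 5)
Your proposal is correct and takes essentially the same route as the paper: the same universal Gr\"obner basis with the lex order putting $x$'s above $y$'s and $z$'s above $w$'s, the same witness monomials ($y_1\cdots y_r w_1\cdots w_s$, respectively $y_1^p w_1\cdots w_s$) for the lower bound, and the same pigeonhole-plus-Schmid argument, exploiting the prime-$p$ refinement to choose the repeated-character pair, for the upper bound. The only cosmetic difference is that the paper bounds $\deg_{xy}$ and $\deg_{zw}$ separately instead of first clearing the obvious quadratic elements of $I_H$ from $n$, and cites membership of $x_i^2n$ in $\GG$ where you invoke part (b) of the proposition on the Hilbert ideal directly.
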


\begin{proof}
We write $d$ for the top degree of $F[V]_G$. For a polynomial
$f\in F[V]$, let $\deg_{xy}f$ denote the degree of $f$ in the
variables $x_1,\ldots,x_r,y_1,\ldots,y_r$, and define $\deg_{zw}
f$ similarly. Let $m$ be a monomial. The proof consists of four
observations. (i) If $\deg_{zw}m>s$, then  $m$ is divisible either
by $z_iw_i$ or one of $z_i^2$ or $w_i^2$ for some $i=1,\ldots,s$,
in particular $m\in I_H$. This implies $d\le s$ in case $r=0$.
(ii) If $\deg_{xy}m > \max(r,p)$ then $\deg_{xy}m> r$ implies that
$m$ is divisible by $x_iy_i$ or $x_i^2$ or $y_i^2$ for some
$i=1,\ldots,r$. In the first case $m\in I_H$, so without loss of
generality we can assume $x_{i}^{2}|m$ for some $i$. By
Proposition \ref{SchmidCyclicGroups}, $\deg_{xy}m>p$ implies that
there exists a factorization $m=(x_{i}n)x_in'$ such that $x_in$ is
a $\rho$-invariant monomial of degree at most $p$. As $x_{i}^{2}n$
is an element of $\GG$, we have $m\in I_H$. Now (i) and (ii) imply
that if $\deg(m)>s+\max(r,p)$, then $m\in I_H$, hence $d\le
s+\max(r,p)$. (iii) We claim that $n:=y_1\cdots y_r w_1\cdots w_s$
is not in $I_H$, hence $d\ge r+s$. Otherwise, $n$ would be
divisible by the leading monomial of an element of $\GG$.   Since no variable in $n$ has
multiplicity bigger than one, $n$ is in fact divisible  by
$\LM(m+\sigma(m))$ for some monomial  $m\in M^\rho\setminus M^G$
of degree at most $p$. As $\GG$ is a universal Gr\"obner basis, we
can choose a lexicographic order $>$ with $x_i>y_j$ and $z_i>w_j$
for all $i,j$ and  assume $m>\sigma(m)$. We fix this order until
the end of the proof. Then $m|n$ implies that $m=y_{i_1}\cdots
y_{i_k} w_{j_1}\cdots w_{j_{l}}$, but then
$\sigma(m)=x_{i_1}\cdots x_{i_k} z_{j_1}\cdots z_{j_l}>m$ by the
choice of our order, a contradiction. (iv) Finally if $r\ge 1$, we
claim that $n:=y_1^p w_1\cdots w_s$ is not in $I_H$, hence $d\ge
p+s$. As before, $n\in I_H$ would imply that $n$ is divisible by the leading
monomial of an element of $\GG$. Notice that a $\rho$-invariant monomial divisor
of $n$ either is divisible by $y_1^p$ or is not divisible by $y_1$ at
all. It follows that the only leading monomial of a member of $\GG$ that divides $n$ is
of the form $\LM(m+\sigma(m))$ for some monomial $m\in
M^\rho\setminus M^G$ of degree at most $p$. Assuming
$m>\sigma(m)$, we see that $m$ would be of the form $w_{i_1}\cdots
 w_{i_k}$ or $y_1^p w_{i_1}\cdots w_{i_k}$, so $\sigma(m)$ would
 be of the form $z_{i_1}\cdots
 z_{i_k}$ or $x_1^p z_{i_1}\cdots z_{i_k}$ respectively. In each
 case, we have the contradiction $\sigma(m)>m$ by choice of our
 monomial order.
\end{proof}
\begin{Example}
We take $r=1,\,\,s=0$ and write $x$ and $y$ for $x_1$ and $y_1$.
Then $F[V]^G=F[xy,x^p+y^p]$, see e.g. \cite[Remark 5]{SKD2p}. In
particular, all elements in the Hilbert ideal of degree less than
$p$ are divisible by $xy$, so the bound in Proposition
\ref{HilbertIdeal} (a) is sharp. A universal Gr\"obner Basis of
$I_H$ is given by $\GG=\{xy,x^p+y^p,x^{p+1},y^{p+1}\}$. If we
choose lexicographic order with $x>y$, we see that the lead term
ideal of $I_H$ is minimally spanned by $\{xy,x^p,y^{p+1}\}$. In
particular, any Gr\"obner Basis must contain an element of degree
$p+1$. The generators of $F[V]^G$ form a homogeneous system of
parameters in degrees $d_1=2$ and $d_2=p$. Thus, Theorem
\ref{GenCov} yields the sharp bounds $\topdeg(F[V]_G)\le
(d_1-1)+(d_2-1)=p=s+\max(r,p)$ and $\dim(F[V]_G)\le d_1 d_2=2p$.
\end{Example}

Note that in case $r\ge 1$, the top degree of the coinvariants is
the same as the upper bound for the degrees of elements in a
minimal  generating set for the invariant ring that is given in
\cite[Theorem 4]{SKD2p}. If $r=0$, what we really consider are the
vector invariants of the permutation action of $\BZ_{2}$. In this
case, the fact that the top degree  of the coinvariants is $s$
also follows from \cite[Theorem 2.1]{MR2193198}. The maximal
degree of elements in a minimal generating set in this case is
also given by $s$ if $s\ge 2$, see \cite{MR1051222}. It would
hence be tempting to conjecture that the invariant ring is always
generated by invariants of degree at most the top degree of the
coinvariants. However, in case $r=0$ and $s=1$, we have
$F[z,w]^{G}=F[zw,z+w]$, but the top degree of the coinvariants is
one.

\bibliographystyle{plain}
\bibliography{Bibliography_Version_11}
 \end{document}